  \newcommand\mytitle{Increasing sequences of sectorial forms} 
\newcommand\lhead{H. Vogt, J. Voigt}
\newcommand\rhead{Increasing sequences of sectorial forms}
\numberwithin{equation}{section}
\newtheorem{theorem}{Theorem}[section]
\newtheorem{proposition}[theorem]{Proposition}
\newtheorem{lemma}[theorem]{Lemma}
\theoremstyle{definition}
\newtheorem{remark}[theorem]{Remark}
\newtheorem{example}[theorem]{Example}
 \mathchardef\ordinarycolon\mathcode`\:
\newcommand\rlim{
\mathchoice{\vcenter{\hbox{${\scriptstyle{+}}$}}}
{\vcenter{\hbox{$\scriptstyle{+}$}}}
{\vcenter{\hbox{$\scriptscriptstyle{+}$}}}
{\vcenter{\hbox{$\scriptscriptstyle{+}$}}}}
\newcommand\smid{\nonscript \mskip2mu plus2mu {\mid}%
\nonscript \mskip2mu plus2mu}     
\newcommand\scpr[2]{{(#1\smid#2)}}
\newcommand\dom{\operatorname{dom}}
\newcommand\ran{\operatorname{ran}}
\newcommand\Arg{\operatorname{Arg}}
\renewcommand{\Re}{\operatorname{Re}}
\renewcommand{\Im}{\operatorname{Im}}
\newcommand\imu{{\rm i}}
\renewcommand\phi{\varphi}
\renewcommand\epsilon{\varepsilon}
\newcommand{\R}{\mathbb{R}\nonscript\hskip.03em}
\newcommand{\N}{\mathbb{N}\nonscript\hskip.03em}
\newcommand{\C}{\mathbb{C}\nonscript\hskip.03em}
\newcommand{\K}{\mathbb{K}\nonscript\hskip.03em}
\newcommand\cL{\mathcal L}
\newcommand\la{\lambda}
\newcommand\tmo{^{-1}}
\newcommand\toh{^{1/2}}      
\newcommand\tmoh{^{-1/2}}    
\newcommand\tint{{\textstyle\int}}
\newcommand\norm[1]{\|#1\|}
\newcommand\ndash{\rule[.58ex]{\widthof{--}}{0.065ex}} 
\newcommand\eul{{\rm e}}
\let\qedhere@ams\qedhere
\def\qedhere{\@ifnextchar[{\@qedhere}{\qedhere@ams}}
\def\@qedhere[#1]{\tag*{\raisebox{-#1ex}{\qedhere@ams}}}
\def\env@cases{%
  \let\@ifnextchar\new@ifnextchar
  \left\lbrace
  \def\arraystretch{1.1}%
  \array{@{\,}l@{\quad}l@{}}%
}
\renewcommand\section{\@startsection {section}{1}{\z@}%
                                     {-3.25ex \@plus -1ex \@minus -.2ex}%
                                     {1.5ex \@plus.2ex}%
                                     {\normalfont\large\bfseries}}
\newcommand\set[2]{\bigl\{#1{;}\penalty300\;#2\bigr\}}
\newcommand\bset[2]{\Bigl\{#1{;}\;#2\Bigr\}}
\newcommand\ol{\overline}
\newcommand\smallbds{\vskip-1\lastskip\vskip5pt plus3pt minus2pt\noindent}
\newcommand\rfrac[2]{\tfrac{#1}{\raisebox{0.1em}{$\scriptstyle#2$}}}
\renewcommand\le{\leqslant}
\renewcommand\ge{\geqslant}
\newcommand\sse{\subseteq}
\newcommand\di{\mathclose{}\,\mathrm{d}}
\newcommand\slim{\mathop{\rm s\kern.08em\mbox{\rm -}lim}} 
\newcommand\abstracttext{\noindent
We prove convergence results for `increasing' sequences of 
sectorial forms. We treat both the case of closed forms and the case of
non-closable forms.
\vspace{8pt}

\noindent
MSC 2010: 47A07
\vspace{2pt}

\noindent
Keywords: sectorial form, strong resolvent convergence
}
\begin{document}
\title{\mytitle}

\author{Hendrik Vogt and J\"urgen Voigt}

\date{}

\maketitle

\begin{abstract}
\abstracttext
\end{abstract}

\section*{Introduction}
\label{intro}

In this paper we treat a convergence theorem for 
increasing sequences of sectorial forms in a complex Hilbert space.
More precisely, we will deal with 
a sequence $(a_n)$ of sectorial forms with vertex~$0$ and angle 
$\theta\in[0,\pi/2)$, with $\dom(a_n)\supseteq\dom(a_{n+1})$ and such 
that $a_{n+1}-a_n$ is sectorial with vertex~$0$ and angle $\theta$ for 
all $n\in\N$. (This is what we mean by `increasing sequence'. The setup 
implies that the real parts of the forms constitute an increasing sequence of 
symmetric forms, in the usual sense.) We do not assume 
that the forms are densely defined, and hence one does not obtain an operator 
$A_n$ associated with $a_n$, but rather a linear relation. These notions will 
be explained in more detail in Section~\ref{sec-prelims} of the paper.
The aim is to show that the linear relations converge to a limit linear
relation $A$ in strong resolvent sense; see Theorem~\ref{thm-main} for the case
of closed forms and Theorem~\ref{thm-monconv-ncl} for the case of non-closable
forms.

The history of this kind of convergence results starts with the treatment of 
increasing 
sequences of closed accretive \emph{symmetric} forms, by 
Kato~\cite[Theorem~VIII.3.13]{Kato1966} and 
Simon~\cite[Theorem~3.1]{Simon1978}; see also 
Kato~\cite[Theorem~VIII.3.13a]{Kato1980}. It 
was Simon~\cite[Section~4]{Simon1978} who advocated the use of non-densely 
defined (closed accretive symmetric) forms; the notions of non-densely
defined \emph{sectorial} forms and
their associated linear relations were developed
in~\cite{Kunze2005} and~\cite{BattyElst2014}. The 
result for sectorial forms sketched above, for the case of closed forms, is due 
to Batty and ter 
Elst~\cite[Theorem~2.2]{BattyElst2014}; there it is 
formulated in a 
different guise, with series of sectorial forms. 
Previously, a related kind of sequences of 
sectorial forms had been treated by Ouhabaz~\cite[Theorem~5]{Ouhabaz1995}. 

Our proof of Theorem~\ref{thm-main} 
is inspired by \cite[proof of Theorem~5]{Ouhabaz1995}. 
Whereas in~\cite[proof of Theorem~2.2]{BattyElst2014} the convergence in  
strong resolvent sense is proved directly for the sequence under consideration, 
we use the existing convergence result for the case of increasing sequences of 
symmetric forms and then use Kato's holomorphic families of type~(a) together 
with Vitali's convergence theorem. In 
Remark~\ref{rem-ouhabaz} we show that Ouhabaz' result 
\cite[Theorem~5]{Ouhabaz1995} can be obtained as a corollary of 
Theorem~\ref{thm-main}.

Finally, we also treat the case where the sequence consists of
non-closable forms.
Again, the result we prove is due to Batty and ter Elst 
\cite[Theorem~3.2]{BattyElst2014}. It is remarkable that Ouhabaz' procedure can 
also be adapted to this case and yields a proof that is barely more 
complicated than the proof for the case of closed forms.
\medskip

In Section~\ref{sec-prelims} we explain our notation concerning linear 
relations 
associated with non-densely defined sectorial forms.

In Section~\ref{sec-sf-salr} we discuss the correspondence between the order of 
non-densely defined closed accretive symmetric forms and the inverses of the 
corresponding linear relations. Our treatment is 
motivated by~\cite[Lemma~VI.2.30]{Kato1980}.

Section~\ref{section-monconv} is devoted to the main result for the case of 
closed forms.

In Section~\ref{section-monconv-non-cl} we treat the case of non-closable forms.

\section{Preliminaries on sectorial forms, linear relations and degenerate 
semigroups}
\label{sec-prelims}

Let $H$ be a complex Hilbert space. A sectorial form $a$ in $H$ with vertex~$0$ 
and angle $\theta\in[0,\pi/2)$
is a sesquilinear map $a\colon \dom(a)\times\dom(a)\to\C$,
where the domain $\dom(a)$ is a subspace of $H$ and
\[
a(u):= a(u,u)\in \ol{\Sigma_\theta}\qquad (u\in\dom(a)),
\]
with $\Sigma_\theta:=\set{z\in\C\setminus\{0\}}{\mathopen|\Arg z|<\theta}$ 
if $0<\theta<\pi/2$, 
and $\Sigma_0:=(0,\infty)$. We define
\[
a^*(u,v):=\ol{a(v,u)}\qquad (u,v\in\dom(a^*):=\dom(a)),
\]
$\Re a:=\frac12(a+a^*)$, $\Im a:=\frac1{2\imu}(a-a^*)$ and
\[
\norm u_a:=(\Re a(u) +\norm u_H^2)\toh\qquad (u\in\dom(a)).
\]
The form $a$ is called closed if $(\dom(a),\norm\cdot_a)$ is complete.

Let $a$ be a closed sectorial form. Then an m-sectorial operator $A_0$ in 
$H_0:=\ol{\dom(a)}$ is associated with $a$ via
\[
A_0:=\set{(u,y)\in\dom(a)\times H_0}{a(u,v)=\scpr yv_{H_0}\ (v\in\dom(a))},
\]
and $A_0$ is extended to an m-sectorial linear relation in $H$ by
\begin{align}\label{eq-form-relation}
\begin{split}
A &:=A_0\oplus(\{0\}\times H_0^\perp)\\
&\phantom:=\set{(u,y)\in\dom(a)\times H}{a(u,v)=\scpr yv_{H}\ (v\in\dom(a))}.
\end{split}
\end{align}
The m-sectoriality of $A$, with vertex~$0$ and angle $\theta$, means that
\[
\scpr yx\in\ol{\Sigma_\theta}\qquad((x,y)\in A)
\]
\smallbds
as well as
\[
\ran(I+A)=\set{x+y}{(x,y)\in A}=H.
\]
We point out
that each m-sectorial linear relation $A$ in $H$ is of the form 
\[
A=A_0\oplus(\{0\}\times H_0^\perp),
\]
where $H_0:=\ol{\dom(A)}$, and where $A_0:=A\cap(H_0\times H_0)$ is an 
m-sectorial operator in~$H_0$;
see 
\cite[first paragraph of Section~2]{BattyElst2014}.

Now assume additionally that the form $a$ is symmetric, i.e., 
$a^*=a$ (or equivalently, $a$ is 
sectorial with vertex~$0$ and angle $0$). Expressed differently, we now assume 
that $a$ is a closed accretive symmetric form. 
Then the operator $A_0$ described above is a self-adjoint operator in $H_0$, 
and $A$ is a self-adjoint linear relation, i.e.,
\[
A^*:=\bigl((-A)^\perp\bigr)\tmo = A
\]
(where the orthogonal complement of the linear relation 
$-A=\set{(x,-y)}{(x,y)\in A}$ is taken in $H\oplus H$); 
see~\cite[Section 5]{Arens1961}. 

Closing this section we mention that an m-sectorial linear relation $A$ also 
gives rise to a bounded holomorphic degenerate strongly continuous semigroup.
More precisely, the operator $-A_0$ from above generates a bounded holomorphic 
$C_0$-semigoup $T_0$ on $H_0$. Let $P_0\in\cL(H)$ denote the orthogonal 
projection onto $H_0$. Then $T(t):= T_0(t)P_0$ ($t\ge0$) defines a holomorphic 
degenerate 
strongly continuous semigroup $T$ on $H$. (`Degenerate' refers to 
the circumstance that  $T(0)$ may be different from the identity $I$, and 
`strongly continuous' to the property $T(0)=\slim_{t\to0\rlim} T(t)$.) The 
`generator' of $T$ is the linear relation $-A$.

We note that, for $t>0$, the operator $T(t)$ can be obtained as a contour 
integral
\begin{equation}\label{eq-contour-int-hol}
T(t)=\frac1{2\pi\imu}\int_{\Gamma}\eul^{-t\la}(\la I-A)\tmo\di\la,
\end{equation}
with a suitable (unbounded) path $\Gamma$ in~$\C$. This is traditional if 
$H_0=H$.
In the above general case it then follows from $(\la I-A)\tmo=(\la I_0-A_0)\tmo 
P_0$ (where $I_0$ is the identity on $H_0$) and \eqref{eq-contour-int-hol} with 
$T_0$ and $A_0$ in place of $T$ and $A$.

\section{On the order for symmetric forms and self-adjoint linear relations}
\label{sec-sf-salr}

Let $H$ be a Hilbert space. Given two accretive symmetric 
forms $a$ and $b$ in $H$, we say that $a\le b$ if 
$\dom(a)\supseteq\dom(b)$ and $a(u)\le b(u)$ for all $u\in\dom(b)$. 
A crucial fact needed in the proof of the form convergence theorem in
the case of symmetric forms is the following result.

\begin{proposition}\label{prop-monot}
Let $a$ and $b$ be closed accretive symmetric forms in $H$. Let $A$ and $B$ be 
the 
associated self-adjoint linear relations, and assume that 
the inverses of $A$ and $B$ are operators belonging to
$\cL(H)$. Then $a\le b$ if and only 
if $B\tmo\le\nobreak A\tmo$.
\end{proposition}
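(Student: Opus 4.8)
The plan is to reduce everything to the well-known case of operators, so I would first record the explicit correspondence between a closed accretive symmetric form $a$ and the bounded positive self-adjoint operator $A\tmo\in\cL(H)$. Since $A=A_0\oplus(\{0\}\times H_0^\perp)$ with $H_0=\ol{\dom(a)}$, the inverse relation $A\tmo$ is $A_0\tmo\oplus(H_0^\perp\times\{0\})$, i.e.\ $A\tmo=A_0\tmo P_0$ where $P_0$ is the orthogonal projection onto $H_0$; the hypothesis that $A\tmo\in\cL(H)$ forces $A_0\tmo\in\cL(H_0)$, so $A_0$ is boundedly invertible and hence $0$ is not in its spectrum. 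The key identity I would establish is the standard one: for every $u\in\dom(a)$ and every $x\in H$,
\[
a(u,A_0\tmo P_0 x)=\scpr ux_H,\qquad\text{equivalently}\qquad a(A\tmo x,A\tmo y)=\scpr{A\tmo x}{y}_H\quad(x,y\in H),
\]
together with the fact that $\ran(A\tmo)=\dom(A_0)$ is a form core for $a$ (this is a classical property of the operator associated with a closed form — it is dense in $\dom(a)$ for the form norm). In particular $a(A\tmo x)=\scpr{A\tmo x}{x}_H$ for all $x\in H$, so $a$ is determined on a core by the quadratic form of $A\tmo$.

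Next I would prove the equivalence. For the direction ``$a\le b$ $\Rightarrow$ $B\tmo\le A\tmo$'': given $x\in H$, put $u:=B\tmo x\in\dom(b)\subseteq\dom(a)$. Then $a\le b$ gives $a(u)\le b(u)=\scpr{B\tmo x}{x}_H$, while Cauchy–Schwarz for the nonnegative form $a$ yields $|\scpr ux_H|^2=|a(u,A\tmo x)|^2\le a(u)\,a(A\tmo x)=a(u)\scpr{A\tmo x}{x}_H$. Combining, $\scpr{B\tmo x}{x}_H^2=|\scpr ux_H|^2\le \scpr{B\tmo x}{x}_H\scpr{A\tmo x}{x}_H$, and cancelling $\scpr{B\tmo x}{x}_H$ (the case $\scpr{B\tmo x}{x}_H=0$ forcing $B\tmo x=0$ is trivial) gives $\scpr{B\tmo x}{x}_H\le\scpr{A\tmo x}{x}_H$, i.e.\ $B\tmo\le A\tmo$. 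This is exactly the Kato–Löwner monotonicity argument of \cite[Lemma~VI.2.30]{Kato1980}, now carried out in the Hilbert space $H$ rather than on a common form domain.

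For the converse ``$B\tmo\le A\tmo$ $\Rightarrow$ $a\le b$'' I would argue that $B\tmo\le A\tmo$ with both operators bounded and positive gives, by taking square roots and the standard operator-monotonicity of $t\mapsto t^{1/2}$, or more directly via the resolvent, that $\dom(b)\supseteq\dom(a)$ is in general the wrong inclusion — so the real content here is that monotonicity of the inverses recovers the \emph{form} ordering. The clean route is: $B\tmo\le A\tmo$ implies $\ran(B\toh)\subseteq\ran(A\toh)$ (a classical lemma, e.g.\ Douglas' range inclusion theorem) and, for $x\in H$, $a(B\tmo x)=\scpr{A\tmo B\tmo x}{B\tmo x}$-type estimates; cleaner still, one uses that the symmetric form generated by the bounded operator $C:=A\tmo$ via $\dom(a)=\ran(C\toh)$, $a(C\toh f)=\norm f^2$, coincides with the given $a$ on the core $\ran(C)$ and then both forms being closed and agreeing on a mutual core are equal, reducing $a\le b$ to the operator inequality $B\tmo\le A\tmo$ via the variational formula $a(u)=\sup\set{2\Re\scpr ux_H-\scpr{A\tmo x}{x}_H}{x\in H}$ for $u\in\dom(a)$, and the analogous formula for $b$; the $\sup$ formula immediately makes ``$B\tmo\le A\tmo$'' equivalent to ``$b\ge a$ with $\dom(a)\supseteq\dom(b)$'' once one checks that $u\in\dom(a)$ iff the $\sup$ is finite.

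The main obstacle I anticipate is \emph{not} the Löwner-type inequality, which is routine, but the bookkeeping around non-dense domains: one must keep careful track of the projection $P_0$ and of the two different closures $\ol{\dom(a)}$ and $\ol{\dom(b)}$ (which need not coincide — indeed $a\le b$ only gives $\ol{\dom(b)}\subseteq\ol{\dom(a)}$ in a suitable sense after closing), and one must verify the variational ($\sup$) formula for the quadratic form in the degenerate setting, i.e.\ that $u\in\dom(a)$ precisely when $\sup_x\bigl(2\Re\scpr ux-\scpr{A\tmo x}x\bigr)<\infty$, with the supremum equal to $a(u)$. Establishing this formula (essentially a Legendre-transform duality between the closed form and the bounded operator $A\tmo$, valid because $A\tmo$ has closed range equal to $H_0$ and is $0$ on $H_0^\perp$) is where the real work lies; once it is in place, both implications of the Proposition drop out by comparing the two suprema.
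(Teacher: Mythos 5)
Your first direction ($a\le b$ implies $B\tmo\le A\tmo$) is complete and correct: the identity $a(u,A\tmo x)=\scpr ux$ for $u\in\dom(a)$, $x\in H$, combined with Cauchy--Schwarz for the accretive symmetric form $a$ and the hypothesis $\dom(b)\sse\dom(a)$, is exactly Kato's Lemma~VI.2.30 argument transplanted to the non-densely defined setting; the degenerate case $\scpr{B\tmo x}x=0$ is also handled correctly (boundedness of $B\tmo$ gives $\norm u^2\le\norm{B\tmo}\,b(u)$ for $u\in\dom(b)$, so $b(B\tmo x)=0$ forces $B\tmo x=0$). This is organised differently from the paper, which proves a single abstract duality statement for closed linear relations (Proposition~\ref{lem2-monot}: $D$ dominates $C$ iff $C^\perp$ dominates $D^\perp$) and obtains both implications at once by applying it to the square roots $A\toh,B\toh$ and $A\tmoh,B\tmoh$ together with $(A\tmoh)^\perp=-A\toh$. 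Your route trades that symmetry for two separate arguments, the second of which is a Legendre-type duality.

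That second direction is where your write-up has a genuine gap. The passage about operator monotonicity of $t\mapsto t^{1/2}$ and Douglas' range inclusion is a false start that you abandon midstream, and the variational formula $a(u)=\sup_x\bigl(2\Re\scpr ux-\scpr{A\tmo x}x\bigr)$, with $u\in\dom(a)$ iff the supremum is finite, is asserted rather than proved --- and, as you yourself note, it carries the entire content of the implication $B\tmo\le A\tmo\Rightarrow a\le b$. The formula is true and provable: for $u\in\dom(a)$ one has $2\Re\scpr ux-\scpr{A\tmo x}x=a(u)-a(u-A\tmo x)\le a(u)$, with equality in the limit because $\ran(A\tmo)=\dom(A_0)$ is a form core; conversely, if the supremum equals $M<\infty$, then scaling $x$ gives $|\scpr ux|^2\le M\+\scpr{A\tmo x}x=M\norm{(A\tmo)\toh x}^2$, whence (by Riesz representation on $\ol{\ran((A\tmo)\toh)}$, i.e.\ the paper's Lemma~\ref{lem1-monot}) $u=(A\tmo)\toh z$ with $\norm z^2\le M$, and the second representation theorem $\dom(a)=\dom(A_0\toh)=\ran(A_0\tmoh)$, $a(u)=\norm{A_0\toh u}^2$ then yields $u\in\dom(a)$ with $a(u)\le M$. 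So your argument can be closed, but only after importing precisely the square-root machinery (second representation theorem plus a Riesz/domination step) that the paper's Proposition~\ref{lem2-monot} encapsulates; as submitted, the proof of this implication is not there.
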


In \cite[proof of Theorem~4.1]{Simon1978} (which deals with the generalisation 
to the non-densely defined case) it is stated that `the proofs 
of Section~3 require no changes'. Somehow, the author 
seems to have overlooked that 
\cite[Proposition~1.1]{Simon1978} -- Proposition~\ref{prop-monot} as above 
-- is only stated for densely defined forms. It 
is no surprise that the equivalence in Proposition~\ref{prop-monot} also 
holds for 
non-densely defined forms; in fact, proofs can be found
in~\cite[Proposition~2.7]{Kunze2005} and \cite[Lemmas~3.2 
and~3.3]{HassiSandoviciSnooWinkler2006}.
Our proof of this equivalence is quite different from those proofs.

The key observation of our treatment is 
Proposition~\ref{lem2-monot} below, to which the following  
elementary lemma is a preparation.

\begin{lemma}\label{lem1-monot}
Let $X$ be a normed space, $H$ a Hilbert space, $P\in\cL(X,H)$, $\eta\in X'$,
$c\ge0$ with the property that
\begin{equation}\label{eq1-monot}
|\eta x|\le c\|Px\|\qquad (x\in X).
\end{equation}
Then there exists
$z\in H$ such that $\|z\|\le c$ and $\eta x=\scpr{Px}z$ for all $x\in X$.
\end{lemma}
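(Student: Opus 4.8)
The plan is to reduce Lemma~\ref{lem1-monot} to the Riesz representation theorem by working on the range of $P$. First I would observe that \eqref{eq1-monot} says precisely that $\eta$ vanishes on $\ker P$: if $Px=0$ then $|\eta x|\le c\|Px\|=0$. Hence $\eta$ factors through the quotient $X/\ker P$, and in fact it makes sense to define a linear functional $\psi$ on the subspace $\ran(P)\sse H$ by $\psi(Px):=\eta x$; this is well defined because $Px=Px'$ forces $\eta x=\eta x'$, again by \eqref{eq1-monot} applied to $x-x'$. The same inequality shows $\psi$ is bounded on $\ran(P)$ with $\|\psi\|\le c$.

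Next I would extend $\psi$ to the closed subspace $\ol{\ran(P)}$ by continuity, keeping the bound $\le c$, and then (either by Hahn--Banach, or more cheaply by composing with the orthogonal projection onto $\ol{\ran(P)}$) view it as a bounded functional on all of $H$ of norm at most $c$. Applying the Riesz representation theorem in the Hilbert space $H$ yields a vector $z\in H$ with $\|z\|\le c$ and $\psi(h)=\scpr hz$ for every $h\in H$; in particular $\eta x=\psi(Px)=\scpr{Px}z$ for all $x\in X$, which is the claim.

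There is really no serious obstacle here — the lemma is, as the authors say, elementary — but the one point that deserves a line of care is the well-definedness of $\psi$ on $\ran(P)$, i.e.\ that the value $\eta x$ depends only on $Px$ and not on the choice of preimage; this is exactly where hypothesis \eqref{eq1-monot} (rather than just boundedness of $\eta$) is used. The completeness of $H$ enters only through the Riesz theorem, so no completeness of $X$ is needed, consistent with $X$ being merely a normed space. I would write the argument in the order: (i)~define $\psi$ on $\ran(P)$ and check it is well defined and bounded by~$c$; (ii)~extend to $H$ preserving the norm bound; (iii)~invoke Riesz to produce $z$ and read off the conclusion.
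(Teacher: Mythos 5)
Your proposal is correct and follows essentially the same route as the paper: define the induced functional on $\ran(P)$, use \eqref{eq1-monot} for well-definedness and the bound $\le c$, and apply the Riesz--Fr\'echet theorem (on $\ol{\ran(P)}$) to obtain $z$. The paper's proof is just a more condensed version of exactly these steps.
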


\begin{proof}
Define $\tilde\eta\colon\ran(P)\to\K$ by $\tilde\eta(Px):=\eta x$ for all
$x\in X$; note that \eqref{eq1-monot} implies that $\tilde\eta$ is well-defined
and continuous on $\ran(P)$, $\|\tilde\eta\|\le c$. The Riesz{\ndash}Fr\'echet
representation theorem implies that there exists $z\in\ol{\ran(P)}$ such that
$\|z\|\le c$ and $\eta x=\tilde\eta(Px)=\scpr{Px}z$ for all $x\in X$.
\end{proof}

Assuming that $G$ and $H$ are Hilbert spaces and that $C$ and $D$ are linear
relations in $G\times H$, we will say that $D$ \textbf{dominates} $C$ if
for all $(x,y)\in D$ there exists $z\in H$ such that $(x,z)\in C$
and
$\|z\|\le\|y\|$. If $C$ and $D$ are operators, this simply means that
$\dom(D) \sse \dom(C)$
and $\norm{Cx} \le \norm{Dx}$ for all $x \in \dom(D)$. 
The following fundamental property concerning this notion is a
more elaborate version of \cite[Lemma~VI.2.30]{Kato1980}.

\begin{proposition}\label{lem2-monot}
Let $G,H$ be Hilbert spaces, and let $C,D$ be closed linear relations in
$G\times H$.
Then $D$ dominates $C$ if and only if $C^\perp$ dominates $D^\perp$.
\end{proposition}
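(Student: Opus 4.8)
The statement is symmetric in the sense that $(C^\perp)^\perp = C$ for closed linear relations, so it suffices to prove one implication: if $D$ dominates $C$, then $C^\perp$ dominates $D^\perp$. I would first unwind what $C^\perp$ and $D^\perp$ are. Here the orthogonal complement is taken in $G \times H$, but to match the ``domination'' setup I should view $D^\perp$ as a relation from $H$ to $G$; concretely, following the convention used for self-adjoint relations in the excerpt, I expect $C^\perp$ (read as a relation in $H \times G$) to be $\set{(y,x)}{(u,v)\in C \Rightarrow \scpr vy = \scpr ux}$ after a sign adjustment, or something of this shape. The first task is to pin down the exact bookkeeping so that the graph-inner-product characterisation of $\perp$ is clean.

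The core of the argument is an application of Lemma~\ref{lem1-monot}. Fix $(y,w) \in D^\perp$; I want to produce $z \in G$ with $(y,z) \in C^\perp$ and $\norm z \le \norm w$. Being in $C^\perp$ means $\scpr v y = \scpr u z$ (up to signs) for all $(u,v) \in C$, i.e.\ $z$ should represent the functional $(u,v) \mapsto \scpr v y$ on the graph of $C$, factoring through the first coordinate. To get the norm bound $\norm z \le \norm w$ I need the estimate $|\scpr v y| \le \norm w \cdot \norm u$ for all $(u,v) \in C$. This is exactly where the hypothesis enters: given $(u,v) \in C$, I want to relate it to an element of $D$. Here I will use that $D$ dominates $C$ --- but note the hypothesis runs the wrong way ($D$ dominates $C$ gives information about elements of $D$, not $C$), so the right move is to exploit the $\perp$ duality: $(y,w) \in D^\perp$ pairs against all of $D$, and I should feed into that pairing the element of $D$ whose existence the domination hypothesis guarantees. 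Concretely, for $(u,v)\in C$ I would like some $(u, v') \in D$... but domination of $C$ by $D$ gives, for $(u',v')\in D$, an element $(u',v'')\in C$ with $\norm{v''}\le\norm{v'}$, which still points the wrong way.

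So the honest structure is: to estimate $|\scpr v y|$ for $(u,v)\in C$, I pair $(u,v)$ against $(y,w)\in D^\perp$ only after replacing $(u,v)$ by a suitable element of $D$. The clean way is to set up the functional on $\ran(P)$ for an appropriate $P$: let $X$ be (a completion related to) the graph of $C$, let $P$ send $(u,v)\mapsto u \in G=:H_{\mathrm{Lemma}}$, and let $\eta(u,v) := \scpr v y$. The required bound \eqref{eq1-monot}, $|\eta(u,v)| = |\scpr v y| \le c\norm u$ with $c = \norm w$, must be derived from ``$D$ dominates $C$'' together with ``$(y,w)\in D^\perp$''. The way domination helps: for $(u,v)\in C$ I do \emph{not} directly get an element of $D$ over $u$; instead I should run the whole argument with the roles arranged so that the hypothesis is used on the relation it actually constrains. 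Given the perfect symmetry $C = (C^\perp)^\perp$, I can equally well prove the contrapositive-free version ``$C^\perp$ dominates $D^\perp$ $\Rightarrow$ $D$ dominates $C$'' and then apply it with $(C,D)$ replaced by $(D^\perp, C^\perp)$; picking whichever of the two directions makes the hypothesis land on the relation being pair-tested is the key organisational point.

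\textbf{Main obstacle.} The real work is Lemma~\ref{lem1-monot}'s applicability, i.e.\ verifying the estimate \eqref{eq1-monot}. In one direction this estimate is essentially immediate from Cauchy--Schwarz applied to the defining pairing of $\perp$ combined with the norm bound supplied by domination; in the other direction the same mechanism applies after the $(C,D)\leadsto(D^\perp,C^\perp)$ substitution and using closedness to get $(C^\perp)^\perp=C$. I expect the only genuinely delicate points to be (i) fixing the sign/coordinate-swap conventions in the definition of $\perp$ so the pairings match up, and (ii) confirming that $z$ produced by Lemma~\ref{lem1-monot} actually lands in $C^\perp$ --- which is just a restatement of $\eta x = \scpr{Px}{z}$ for all $x$ in the graph, hence needs the graph of $C$ (not merely a dense subspace) as the domain, so closedness of $C$ is used here too. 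Beyond that it is bookkeeping.
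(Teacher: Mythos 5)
Your overall mechanism is the paper's: reduce to one implication via $(C^\perp)^\perp=C$, then use Lemma~\ref{lem1-monot} to represent, by a single vector $z$, the linear functional that the orthogonality pairing defines on a graph, with the norm bound coming from Cauchy--Schwarz plus the domination hypothesis. However, the one concrete setup you actually write down targets a false statement, and this is not just bookkeeping. The orthogonal complement here is taken in $G\oplus H$ and does \emph{not} swap coordinates (unlike the adjoint, which is a perp \emph{followed by} an inverse, $A^*=((-A)^\perp)^{-1}$). So $C^\perp,D^\perp$ are again relations in $G\times H$, and ``$C^\perp$ dominates $D^\perp$'' means: for every $(x,y)\in C^\perp$ with $x\in G$, $y\in H$ there is $z\in H$ with $(x,z)\in D^\perp$ and $\norm z\le\norm y$. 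Your version --- start from $(y,w)\in D^\perp$ read in $H\times G$ and produce $(y,z)\in C^\perp$ with $z\in G$, $\norm z\le\norm w$ --- unwinds (via $(M^\perp)^{-1}=(M^{-1})^\perp$) to ``$C^{-1}$ dominates $D^{-1}$'', which does not follow from the hypotheses: take $G=H=\C^2$, $D=\operatorname{diag}(2,1)$ and $C=UD$ with $U$ the coordinate swap. Then $\norm{Cx}=\norm{Dx}$ for all $x$, so $D$ dominates $C$; but $D^\perp=\set{(-Db,b)}{b\in\C^2}$, $C^\perp=\set{(-C^*b,b)}{b\in\C^2}$, and for $b=(0,1)^T$ one has $\norm{C^*b}=2>1=\norm{Db}$, so your target fails. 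Correspondingly, your choice $P(u,v)=u$, $\eta(u,v)=\scpr vy$ on the graph of $C$ is backwards: $P$ must project onto the coordinate being norm-bounded (the second one) and $\eta$ must pair the first coordinate.

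You do sense the problem (``the hypothesis runs the wrong way''), and your proposed reorganisation --- prove ``$C^\perp$ dominates $D^\perp$ $\Rightarrow$ $D$ dominates $C$'' and substitute $(C,D)\rightsquigarrow(D^\perp,C^\perp)$ --- is sound and, once written out, is exactly the paper's proof up to relabelling. In the paper's (equivalent) direct form: fix $(x,y)\in C^\perp$, take $X=D$, $P(f,g)=g$, $\eta(f,g)=\scpr{-f}x$; for $(f,g)\in D$ domination gives $(f,h)\in C$ with $\norm h\le\norm g$, and $(f,h)\perp(x,y)$ yields $|\scpr fx|=|\scpr hy|\le\norm g\norm y$, so Lemma~\ref{lem1-monot} produces $z$ with $(x,z)\in D^\perp$ and $\norm z\le\norm y$. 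The one small trick --- that $h$ is used only in the estimate and drops out of the functional --- is precisely the step your write-up never pins down. A minor further correction: closedness is needed only for the reduction to a single implication; the representation $\eta x=\scpr{Px}z$ holds on the whole chosen graph by construction, so $z\in D^\perp$ requires no closedness of anything.
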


\begin{proof}
It clearly suffices to show `$\Rightarrow$'. Let $(x,y)\in C^\perp$.

Let $(f,g)\in D$. By hypothesis, there exists $h\in H$ such that $(f,h)\in C$
and $\|h\|\le\|g\|$. Then $(f,h)\perp (x,y)$, hence
\[
|\scpr {-f}x|=|\scpr hy|\le\norm h\norm y\le\norm g\norm y,
\]
and with $P\colon D\to H,\ (f,g)\mapsto g$ and $\eta\colon D\to\K,\
(f,g)\mapsto \scpr
{-f}x$ it follows that $|\eta(f,g)|\le\norm
y\norm{P(f,g)}$
(note that $h$ has dropped out of these properties). Now we can apply
Lemma~\ref{lem1-monot} to obtain $z\in H$ such that $\norm z\le\norm y$ and
$\scpr {-f}x =\scpr{P(f,g)}z =\scpr gz$ for all $(f,g)\in D$, i.e.,
$(x,z)\in D^\perp$ and $\norm z\le\norm y$.

Summarising, we have shown that $C^\perp$ dominates $D^\perp$.
\end{proof}

\begin{proof}[Proof of Proposition~\ref{prop-monot}]
Let $A_0$ be the accretive self-adjoint operator in $\ol{\dom(a)}$ 
associated with $a$, and denote by $P_a$ 
the orthogonal projection onto $\ol{\dom(a)}$. The (accretive 
self-adjoint) square root of $A\tmo$ will be denoted by $A\tmoh$. We point 
out that then $A_0\tmoh$, the square root of 
$A_0\tmo$, is the restriction of $A\tmoh$ to $\overline{\dom(a)}$, and we 
define 
\[
A\toh:=(A\tmoh)\tmo= \set{(x,y)\in H\times H}{(x,P_ay)\in A_0\toh}.
\]
The corresponding notation and properties will also be used for~$b$. 

It is evident that $a\le b$ if and only if 
$B_0\toh$ dominates $A_0\toh$. The latter, in turn, holds if and only if 
$B\toh$ dominates $A\toh$. (Clearly, $A_0\toh$ dominates $A\toh$, but also 
conversely: if $(x,y)\in A\toh$, then $(x,P_ay)\in A_0\toh$ and 
$\norm{P_ay}\le\norm y$. By the same token, $B_0\toh$ and $B\toh$ dominate each 
other. As it is easily seen that domination is transitive, one concludes the 
last assertion above.)

On the other hand,
$A\tmo\ge B\tmo$ if and only if $A\tmoh$ dominates $B\tmoh$.

Finally we observe that
\[
(A\tmoh)^\perp=\bigl((-A\tmoh)^*\bigr)\tmo=-(A\tmoh)\tmo=-A\toh
\]
by the self-adjointness of $A\tmoh$, and similarly $(B\tmoh)^\perp = -B\toh$. 
Now, applying 
Proposition~\ref{lem2-monot} we conclude that $B\toh$ dominates $A\toh$ if and 
only if $A\tmoh$ dominates $B\tmoh$.

This proves the asserted equivalence.
\end{proof}

\section{Monotone convergence of sectorial forms}
\label{section-monconv}

In this section we prove the main result concerning closed forms.
The basic idea is to use the well-established convergence result
for symmetric forms and to `propagate' it to the case of
sectorial forms by holomorphy.

\begin{theorem}\label{thm-main}
Let $H$ be a complex Hilbert space, and let $\theta\in[0,\pi/2)$. Let $(a_n)$ 
be a sequence of closed sectorial forms in $H$ with vertex~$0$ and angle 
$\theta$, and 
assume that $\dom(a_n)\supseteq\dom(a_{n+1})$ and that $a_{n+1}-a_n$ is 
sectorial with vertex~$0$ and angle~$\theta$, for all $n\in\N$. For 
$n\in\N$ let $A_n$ be the m-sectorial linear relation associated with~$a_n$. 
Define
\[
\dom(a):=\bset{u\in\bigcap_{n\in\N}\dom(a_n)}{\sup_{n\in\N}\Re a_n(u)<\infty}.
\]
Then for all $u,v\in\dom(a)$ the limit $a(u,v):=\lim_{n\to\infty}a_n(u,v)$ 
exists, 
the form $a$ thus defined is a closed sectorial form with vertex~$0$ and angle 
$\theta$, and $A_n\to A$ ($n\to \infty$) in strong resolvent sense, where 
$A$ is the m-sectorial linear relation associated with $a$.
\end{theorem}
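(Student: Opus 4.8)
The plan is to split the proof into two parts: first the statements about the form $a$ (existence of the limit, sectoriality, closedness), and then the strong resolvent convergence $A_n\to A$.

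For the first part, I would begin with the real parts. Since $a_{n+1}-a_n$ is sectorial with vertex~$0$ and angle~$\theta$, in particular $\Re(a_{n+1}-a_n)\ge0$ on $\dom(a_{n+1})$, so $(\Re a_n)$ is an increasing sequence of closed accretive symmetric forms in the sense of Section~\ref{sec-sf-salr}, with decreasing domains. By the classical monotone convergence theorem for symmetric forms (Kato, Simon), the limit form $\Re a$ is a closed accretive symmetric form with domain exactly $\dom(a)$ as defined in the statement, and $\Re a_n(u)\to\Re a(u)$ for $u\in\dom(a)$. For the imaginary parts, sectoriality of $a_{n+1}-a_n$ gives $|\Im(a_{n+1}-a_n)(u)|\le\tan\theta\cdot\Re(a_{n+1}-a_n)(u)$; combined with the analogous bound for $a_1$ this shows that $(\Im a_n(u))$ is Cauchy (its increments are controlled by the increments of the convergent sequence $\Re a_n(u)$), hence convergent, for every $u\in\dom(a)$. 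Polarisation then gives convergence of $a_n(u,v)$ for all $u,v\in\dom(a)$, and passing to the limit in the sectoriality inequality $|\Im a_n(u)|\le\tan\theta\cdot\Re a_n(u)$ shows $a$ is sectorial with vertex~$0$ and angle~$\theta$. Closedness of $a$ is inherited from closedness of $\Re a$ because $\norm\cdot_a=\norm\cdot_{\Re a}$.

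For the strong resolvent convergence, I would follow the Ouhabaz-type idea indicated in the introduction: interpolate holomorphically between the symmetric case and the general sectorial case. Fix $u,v\in\dom(a)$ and consider, for $z$ in a neighbourhood of the interval $[0,1]$ (or a suitable sector in $\C$), the forms $a_n^{(z)}:=\Re a_n + z\,\Im a_n$, or more precisely a family interpolating between $\Re a_n$ and $a_n$; these constitute a holomorphic family of type~(a) in Kato's sense, uniformly in $n$, because the real parts agree and the relative bound on the imaginary part is uniform ($\le\tan\theta$). Let $A_n^{(z)}$ be the associated m-sectorial relations and $T_n^{(z)}(t)$ the corresponding degenerate semigroups, represented by the contour integral~\eqref{eq-contour-int-hol}. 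For each fixed real parameter value corresponding to the symmetric form $\Re a_n$, the monotone convergence theorem for symmetric forms already gives strong resolvent convergence, hence convergence of the resolvents $(\la I-A_n^{(z)})\tmo\to(\la I-A^{(z)})\tmo$ strongly (applied to $\Re a_n$). The family $z\mapsto(\la I-A_n^{(z)})\tmo x$ is holomorphic and locally uniformly bounded in $n$ (resolvent bounds for m-sectorial relations depend only on $\theta$ and the distance of $\la$ to the sector), so Vitali's convergence theorem upgrades the convergence from the real slice to the whole domain of holomorphy, in particular to $z$ giving the original forms $a_n$. This yields $(\la I-A_n)\tmo x\to(\la I-A)\tmo x$ for a single $\la$, and then for all $\la$ in the resolvent set by the standard resolvent-identity argument, which is exactly strong resolvent convergence.

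The main obstacle I expect is setting up the holomorphic family and the Vitali argument cleanly in the \emph{non-densely defined} setting, where one works with linear relations rather than operators. One must check that $z\mapsto a_n^{(z)}$ is genuinely a holomorphic family of type~(a) with a common form domain $\dom(a_n)$ and uniform-in-$n$ sectoriality constants, so that the resolvents $(\la I - A_n^{(z)})\tmo$ are well-defined bounded operators depending holomorphically on $z$ with a bound independent of~$n$; here the reduction $A_n^{(z)} = (A_n^{(z)})_0\oplus(\{0\}\times H_0^\perp)$ and $(\la I-A_n^{(z)})\tmo=(\la I_0-(A_n^{(z)})_0)\tmo P_0$ from Section~\ref{sec-prelims} is what makes the relation case no harder than the operator case, since $P_0$ is a fixed projection. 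One also needs that the limit relation $A$ obtained this way coincides with the m-sectorial relation associated with the limit form $a$; this follows by identifying the limit of the resolvents as the resolvent of the relation associated with $a$, using that the form $a$ is the limit of the $a_n$ in the appropriate sense. Once these technical points are in place, the proof is just an assembly of the symmetric convergence theorem, Kato's type~(a) perturbation theory, and Vitali.
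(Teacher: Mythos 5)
Your proposal follows essentially the same route as the paper's proof: the symmetric monotone convergence theorem on a real slice, Kato's holomorphic families of type~(a) for the interpolating forms $\Re a_n+z\Im a_n$, and Vitali's theorem to reach $z=\imu$. One step, however, is not nailed down and as literally written would make the argument fail. Vitali's theorem needs convergence of the holomorphic functions $z\mapsto(I+A_n^{(z)})\tmo u$ on a subset of the parameter domain that has an accumulation point; your phrase ``for each fixed real parameter value corresponding to the symmetric form $\Re a_n$ \dots\ (applied to $\Re a_n$)'' reads as if the symmetric theorem is invoked only at $z=0$, which is a single point and hence useless for Vitali. What is actually required is to apply the symmetric theorem for \emph{every} real $x$ with $|x|<1/C$ (where $C$ is the common sectoriality constant, essentially $\tan\theta$), and for that one must verify that $\bigl(\Re a_n+x\Im a_n\bigr)_n$ is an \emph{increasing} sequence of accretive symmetric forms; this is exactly where the sectoriality of the differences $a_{n+1}-a_n$ (and not merely $\Re(a_{n+1}-a_n)\ge0$) is used, via $|x\+\Im(a_{n+1}-a_n)(u)|\le\Re(a_{n+1}-a_n)(u)$. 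You state the needed inequality when proving convergence of $\Im a_n(u)$, but never assemble it into this monotonicity claim. Relatedly, the parameter domain cannot be ``a neighbourhood of $[0,1]$'': with your parametrisation the original forms sit at $z=\imu$, and the correct domain is the strip $\set{z\in\C}{\mathopen|\Re z|<1/C}$, which contains both the real interval of convergence and $\imu$. With these repairs, and after identifying the Vitali limit with $(I+A_z)\tmo$ via holomorphy of the limit family (a point you do note), your argument coincides with the paper's. The only genuinely different choice is minor: you deduce closedness of $a$ from closedness of the symmetric limit form $\Re a$ together with $\norm\cdot_a=\norm\cdot_{\Re a}$, whereas the paper gives a short direct Cauchy-sequence argument; both are fine.
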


\begin{proof}
(i) 
Similarly as in the well-known symmetric case one shows that
$a$ as defined above is a closed sectorial form (see 
\cite[Lemma~2.1]{BattyElst2014});
for completeness we include the argument.
The Cauchy\ndash Schwarz inequality implies that $\dom(a)$ 
is a vector space, and from the 
sectoriality of the forms $a_n-a_m$ ($n>m$) and the polarisation identity one 
concludes that $\lim_{n\to\infty}a_n(u,v)$ exists for all 
$u,v\in\dom(a)$. Clearly, 
$a$ thus defined is a sectorial form. For the closedness of $a$ we have to show 
that $(\dom(a),\norm\cdot_a)$ is complete. Let $(u_n)$ be a 
$\norm\cdot_a$-Cauchy sequence in $\dom(a)$. Then $u:=\lim u_n$ exists in $H$, 
and $(u_n)$ is a $\norm\cdot_{a_m}$-Cauchy sequence, hence $u_n\to u$ in 
$(\dom(a_m),\norm\cdot_{a_m})$ since $a_m$ is closed, for all $m\in\N$. For all 
$n \in\N$ one has 
\[
\sup_{m\in\N}\Re a_m(u-u_n)\le\sup_{m\in\N,\, k\ge n}\Re a_m(u_k-u_n)
=\sup_{k\ge n}\Re a(u_k-u_n) <\infty.
\]
This inequality implies $u\in\dom(a)$, and
\[
 \Re a(u-u_n) \le\sup_{k\ge n}\Re a(u_k-u_n)\to 0\qquad (n\to\infty)
\]
shows that $u_n\to u$ in the $\norm\cdot_a$-norm.

(ii) Assume that all the forms $a_n$ are symmetric. In this case one has 
$a_n\le a_{n+1}$ for all $n\in\N$, and the assertion 
follows from \cite[Theorem~4.1]{Simon1978}; recall that for this general case 
our Proposition~\ref{prop-monot} replaces \cite[Proposition~1.1]{Simon1978}.

(iii) In the general case there exists $C>0$ such that 
\begin{align*}
\mathopen|\Im a_n(u)|&\le 
C\Re a_n(u)\qquad (u\in\dom(a_n)),\\[.6ex plus .3ex minus .2ex] 
\mathopen|\Im(a_{n+1}(u)-a_n(u))|&\le C\Re(a_{n+1}(u)-a_n(u))\qquad 
(u\in\dom(a_{n+1})) 
\end{align*}
for all $n\in\N$, and 
\[
\mathopen|\Im a(u)|\le C\Re a(u)\qquad (u\in\dom(a)). 
\]
Then for 
$z\in\Omega:=\set{z\in\C}{\mathopen|\Re z|<1/C}$ the forms
\[
a_{n,z}:=\Re a_n+z\Im a_n \quad (n\in\N),\qquad a_z:=\Re a+z\Im a
\]
are closed sectorial forms in $H$. Indeed, for $z\in\Omega$, $u\in\dom(a_n)$ we 
have
\begin{gather*}
\Re a_{n,z}(u)=\Re a_n(u) + \Re z \Im a_n(u)
\ge(1-\mathopen|\Re z|\,C)\Re a_n(u) \ge 0, \\[0.5ex]
\mathopen|\Im a_{n,z}(u)|=\mathopen|\Im z\Im a_n(u)|
\le \mathopen|\Im z|\,C\Re a_n(u)\le
\frac{\mathopen|\Im z|\,C}{1-\mathopen|\Re z|\,C}\Re a_{n,z}(u),
\end{gather*}
and similarly for $a$ instead of $a_n$. For $z\in\Omega$, $n\in\N$ let 
$A_{n,z}$ be the m-sectorial linear relation associated with $a_{n,z}$, and let 
$A_z$ be the m-sectorial linear relation associated with $a_z$.

For $x\in(-1/C,1/C)$, $n\in \N$, $u\in\dom(a_{n+1})$ we have
\begin{equation*}
x\Im\bigl(a_n(u)-a_{n+1}(u)\bigr)
\le\frac1C\mathopen{\bigl|}\Im\bigl(a_{n+1}(u)-a_n(u)\bigr)\bigr|\le\Re\bigl(a_
{ n+1 } (u)-a_n(u)\bigr),
\end{equation*}
which implies
\[
a_{n,x}=\Re a_n+x\Im a_n \le \Re a_{n+1} + x\Im a_{n+1}= a_{n+1,x}.
\]
Hence, $(a_{n,x})_n$ is an increasing sequence of closed accretive symmetric 
forms.
Note that for all $x\in (-1/C,1/C)$ the limit form of the sequence
$(a_{n,x})$ has the domain
\[
\bset{u\in\bigcap_{n\in\N}\dom(a_{n,x})}{\sup_{n\in\N} 
a_{n,x}(u)<\infty}
         =\dom(a)=\dom(a_x),
\]
and that
$a_n(u)\to a(u)$ implies $a_{n,x}(u)\to a_x(u)$ as
$n\to\infty$, for all $u\in\dom(a)$.
From the case treated in part (ii) above we conclude that 
\begin{equation}\label{eq-conv-symm}
(I+A_{n,x})\tmo\to(I+A_x)\tmo \qquad(n\to\infty)
\end{equation}
strongly, for all 
$x\in(-1/C,1/C)$. 

For each $n\in\N$, the family $(a_{n,z})_{z\in\Omega}$ is a 
holomorphic family of type (a) in the sense of Kato, and similarly for the 
family $(a_z)_{z\in\Omega}$. By Kato \cite[Theorem~VII.4.2]{Kato1980} -- we 
also refer to the recent proof in \cite{VogtVoigt2018} -- this implies that the 
mappings
$\Omega\ni z\mapsto(I+A_{n,z})\tmo\in\cL(H)$ ($n\in\N$)
and $\Omega\ni z\mapsto(I+A_z)\tmo\in\cL(H)$ are holomorphic. 
(The quoted references only cover the case where the families
$(a_{n,z})_{z\in\Omega}$ and $(a_z)_{z\in\Omega}$ are defined on dense 
subspaces. For the general case
one has to use
the description \eqref{eq-form-relation} of the associated 
m-sectorial linear relations; see also Theorem~\ref{thm-hol-dep-forms}.)

In view of 
the convergence \eqref{eq-conv-symm} and the estimate 
$\|(I+A_{n,z})\tmo\|\le1$, for all 
$n\in\N$, $z\in\Omega$, Vitali's convergence theorem shows that 
$(I+A_{n,z})\tmo\to(I+A_z)\tmo$ ($n\to\infty$) strongly, for all $z\in\Omega$. 
(We refer to~\cite[Theorem~2.1]{ArendtNikolski2000} for an elegant proof of 
Vitali's theorem.)
In particular, setting $z=\imu$ we obtain $a_n=a_{n,\imu}$ for all $n\in\N$ and 
$a=a_\imu$, and therefore $(I+A_n)\tmo\to(I+A)\tmo$ ($n\to\infty$) strongly.
\end{proof}

\begin{remark}\label{rem-ouhabaz}
Ouhabaz \cite[Theorem~5]{Ouhabaz1995} proved the following convergence 
theorem. 
Let $a_n$ ($n\in\N$) and $a$ be densely defined closed sectorial forms with 
vertex~$0$ and angle $\theta_0\in[0,\pi/2)$, $(\Re a_n)_n$ increasing to $\Re a$
in the sense of Theorem~\ref{thm-main}, and suppose that $\Im 
a(u)=\lim_{n\to\infty}\Im 
a_n(u)$ for all $u\in\dom(a)$. Assume that $\Im a_n(u)\le\Im a_{n+1}(u)$ for 
all $u\in\dom(a_{n+1})$, $n\in\N$.
Then $(A_n)$ converges to $A$ in strong
resolvent 
sense, where $A_n$ is associated with $a_n$, for $n\in\N$, and $A$ is 
associated with $a$.

We show that this result can be obtained as a corollary of 
Theorem~\ref{thm-main} (or \cite[Theorem~2.2]{BattyElst2014}). 
Obviously,
\[
a_{n+1}(u)-a_n(u)\in\set{z\in\C}{0\le\Arg z\le\pi/2}.
\]
for all $u\in\dom(a_{n+1})$, $n\in\N$. Let 
$\eta\in(0,\pi/2-\theta_0)$. Then
replacing $a_n$ by 
$\eul^{-\imu\eta}a_n$ one easily checks that the sequence
$(\eul^{-\imu\eta}a_n)_n$ of
forms satisfyies the hypotheses of Theorem~\ref{thm-main} above, with
$\theta:=\max\{\theta_0+\eta,\pi/2-\eta\}$.
In order to check that the sequence $(\eul^{-\imu\eta}a_n)_n$ converges to the
form $\eul^{-\imu\eta}a$ we note that 
\begin{gather*}
\cos\theta_0\,|a_n(u)| \le \Re a_n(u) \le |a_n(u)|,\\[.6ex plus .3ex minus .2ex]
\cos\theta\,\bigl|\eul^{-\imu\eta}a_n(u)\bigr|\le
\Re\bigl(\eul^{-\imu\eta}a_n(u)\bigr)
\le\bigl|\eul^{-\imu\eta}a_n(u)\bigr|
\end{gather*}
for all $u\in\bigcap_{n\in\N}\dom(a_n)$, $n\in\N$,
and this implies that
\[
\bset{u\in\bigcap_{n\in\N}\dom(a_n)}{\sup_{n\in\N}\Re(\eul^{-\imu\eta} 
a_n)(u)<\infty}=\dom(a)= \dom(\eul^{-\imu\eta}a).
\]
Analogously one can treat the alternative case in 
\cite[Theorem~5]{Ouhabaz1995}
where the sequence $(\Im a_n)$ is decreasing instead of increasing, i.e.,
$\Im a_n(u)\ge\Im a_{n+1}(u)$ for all $u\in\dom(a_{n+1})$, $n\in\N$.

In view of the above, Theorem~\ref{thm-main} implies that one can relax the
hypotheses in Ouhabaz' result.
\end{remark}

The remainder of this section is devoted to explaining the implications of the 
convergence in Theorem~\ref{thm-main} for the associated degenerate strongly 
continuous semigroups. For all $n\in\N$ let $T_n$ be the holomorphic degenerate
strongly continuous semigroup generated by $-A_n$, and let $T$ be generated by 
$-A$. We are going to explain why 

(i) for all $u\in H$, $t>0$ one has 
\begin{equation}\label{eq-s-conv-sgps}
T(t)u =\lim_{n\to\infty}T_n(t)u,
\end{equation}
with uniform convergence on compact subsets of $(0,\infty)$,

(ii) for all  $u\in \ran(T(0))=\ol{\dom(a)}$, $t\ge0$ one has 
\eqref{eq-s-conv-sgps}, with uniform convergence on compact subsets of 
$[0,\infty)$.

A preliminary observation for the proof of both assertions is that
the set $\C\setminus \ol{\Sigma_\theta}$ is contained in
the resolvent sets of $A_n$ for all $n\in\N$ and of $A$,
by our hypotheses. Furthermore, fixing 
$\theta'\in(\theta,\pi/2)$, the resolvents obey an estimate
\begin{equation}\label{eq-res-estimate}
\norm{(\la-A_n)\tmo}\le\frac c{|\la|}
\end{equation}
on $\C\setminus(\Sigma_{\theta'}\cup\{0\})$, and the same for $A$, with a 
constant $c\ge0$ independent of $n$. Finally, the strong resolvent 
convergence implies that $(\la-A_n)\tmo\to(\la-A)\tmo$ strongly, uniformly for 
$\la$ in compact subsets of $\C\setminus(\Sigma_{\theta'}\cup\{0\})$.

In order to show the convergence in (i) we now specify the path $\Gamma$ 
mentioned in the formula \eqref{eq-contour-int-hol} as the boundary of the set 
$\Sigma_{\theta'}\cup B_\C(0,1)$, oriented `counterclockwise'. Then the 
assertion is an easy consequence of \eqref{eq-contour-int-hol} for the 
semigroups $T_n$ and~$T$, the estimate \eqref{eq-res-estimate} and the strong 
resolvent convergence 
formulated at the end of the previous paragraph; 
see~\cite[Theorem~2.3]{Kunze2005}. Another source for (i) 
is~\cite[Theorem~5.2]{Arendt2001} (where the erroneously asserted uniform 
convergence on $[0,\tau]$ has to be replaced by uniform convergence on compact 
subsets of $(0,\infty)$).

For the convergence (ii) we refer to \cite[Theorem~4.2(a)]{Arendt2001}.
Alternatively, one can prove (ii) by adapting the proof of the Trotter 
approximation theorem in \cite[Section~3.4]{Pazy1983} to the case of degenerate 
strongly continuous semigroups.

\section{The case of non-closable sectorial forms}
\label{section-monconv-non-cl}

In this section we show that the following result, contained in
\cite[Theorem~3.2]{BattyElst2014}, 
can be obtained by the method presented in 
Section~\ref{section-monconv}.

\begin{theorem}\label{thm-monconv-ncl}
Let $H$ be a complex Hilbert space, and let $\theta\in[0,\pi/2)$. Let $(a_n)$ 
be a sequence of sectorial forms in $H$ with vertex~$0$ and angle $\theta$, and 
assume that $\dom(a_n)\supseteq\dom(a_{n+1})$ and that $a_{n+1}-a_n$ is 
sectorial with vertex~$0$ and angle~$\theta$, for all $n\in\N$. For 
$n\in\N$ let $A_n$ be the m-sectorial linear relation associated with~$a_n$. 

Then there exists an m-sectorial linear relation $A$ such that $A_n\to A$ 
($n\to\infty$) in strong resolvent sense.
\end{theorem}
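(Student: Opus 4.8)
The plan is to mimic the proof of Theorem~\ref{thm-main}, but since the forms $a_n$ need not be closable we cannot directly invoke Simon's convergence result in the symmetric case. The essential observation is that the real parts $\Re a_n$ form an increasing sequence of (not necessarily closable) accretive symmetric forms, and for such sequences the \emph{largest closable form below all of them} behaves well. Concretely, first I would reduce to the symmetric situation by the same device as in part (iii) of the proof of Theorem~\ref{thm-main}: choose $C>0$ controlling the sectoriality angles uniformly, and for $z\in\Omega=\set{z\in\C}{|\Re z|<1/C}$ set $a_{n,z}:=\Re a_n+z\Im a_n$. For real $x\in(-1/C,1/C)$ the sequence $(a_{n,x})_n$ is an increasing sequence of accretive symmetric forms, and the core step is to handle monotone convergence for such a sequence when the forms are not closable.

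For that core step I would use the regularisation of non-closable symmetric forms: given an accretive symmetric form $b$, its \emph{relaxation} (the largest closable form $\le b$, equivalently the form obtained from the lower-semicontinuous regularisation of $u\mapsto b(u)+\norm u_H^2$) is closable, and the self-adjoint linear relation associated with $b$ is by definition the one associated with this closable regularisation; this is the framework of \cite{Simon1978}, \cite{BattyElst2014}. The point is that if $(b_n)$ is increasing then the regularisations $(\tilde b_n)$ are again increasing closed accretive symmetric forms, and moreover the limit form of $(b_n)$ in the sense of Theorem~\ref{thm-main} (with $\sup_n b_n(u)<\infty$ defining the domain) has the same regularisation as the supremum; hence \cite[Theorem~4.1]{Simon1978} applied to $(\tilde b_n)$ gives the strong resolvent convergence $(I+A_{n,x})\tmo\to(I+B_x)\tmo$ for a self-adjoint linear relation $B_x$, for every $x\in(-1/C,1/C)$. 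This is the step I expect to be the main obstacle: one must check carefully that passing to regularisations commutes with taking the increasing limit, and that the associated linear relations are unaffected — i.e. that the ``limit object'' produced is intrinsic and does not depend on the choice of regularisation procedure.

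Once the real slices are under control, I would propagate holomorphically exactly as before. For each fixed $n$ the family $(a_{n,z})_{z\in\Omega}$ is a holomorphic family of type~(a) (the imaginary part is a bounded perturbation of the fixed closed form $\Re a_n$, or rather of its regularisation in the non-closable case), so $z\mapsto(I+A_{n,z})\tmo\in\cL(H)$ is holomorphic on $\Omega$, with $\norm{(I+A_{n,z})\tmo}\le1$. We have pointwise convergence on the real segment $(-1/C,1/C)\subset\Omega$ to a holomorphic limit $z\mapsto(I+B_z)\tmo$ (holomorphy of the limit also follows from Vitali once we know local boundedness, which we have). By Vitali's convergence theorem (\cite[Theorem~2.1]{ArendtNikolski2000}) the convergence holds for all $z\in\Omega$; setting $z=\imu$ gives $(I+A_n)\tmo\to(I+B_\imu)\tmo$ strongly. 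Finally I would define $A:=B_\imu$ and verify m-sectoriality of $A$ with vertex~$0$ and angle $\theta$: accretivity and the resolvent bound pass to the strong limit, and $\ran(I+A)=H$ follows because $(I+A_n)\tmo\to(I+A)\tmo$ strongly and each $(I+A_n)\tmo$ is a contraction, so $(I+A)\tmo\in\cL(H)$ is injective with dense range forcing it to be a bijection onto $H$; the sector condition $\scpr yx\in\ol{\Sigma_\theta}$ for $(x,y)\in A$ follows from the corresponding property of the $A_n$ together with norm-resolvent-type estimates on the sector $\C\setminus\ol{\Sigma_\theta}$, exactly as in the discussion preceding \eqref{eq-res-estimate}. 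The strong resolvent convergence for all $\la\notin\ol{\Sigma_\theta}$ then follows from convergence at $\la=-1$ by the standard resolvent identity argument.
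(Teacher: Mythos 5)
Your overall strategy coincides with the paper's: reduce to the symmetric case via the real slices $a_{n,x}=\Re a_n+x\Im a_n$, handle the symmetric non-closable case through Simon's regular part (the needed monotonicity of the regular parts is \cite[Corollary~2.4]{Simon1978}, after which \cite[Theorem~4.1]{Simon1978} applies), propagate by holomorphy and Vitali, and finally verify m-sectoriality of the limit relation. Two small remarks on the endpoints: in the symmetric step you do not need to check that regularisation commutes with the increasing limit, since the theorem makes no claim identifying the limit relation in terms of the forms; and in the final step $\ran(I+A)=\dom(R)=H$ is automatic because $R\in\cL(H)$ is everywhere defined --- no injectivity of $R$ is needed (the limit relation may well be multivalued); the sector condition passes to the limit directly as in Lemma~\ref{lem-limit-sect}.

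The one step where your justification would fail as written is the holomorphy of $z\mapsto(I+A_{n,z})\tmo$. Your parenthetical proposes to realise $a_{n,z}$ as a bounded perturbation, by $z\Im a_n$, of (the regularisation of) the fixed form $\Re a_n$, and then to invoke the type-(a) machinery on that closed reference form. But regularisation does not commute with adding the imaginary part: Example~\ref{example-sec-forms}(b) exhibits non-closable $a_n$ for which the family $(A_{n,z})_z$ does not correspond to any family $b+zc$ with closed symmetric forms $b$ and $c$, so there is no closed form on a fixed subspace of $H$ from which a type-(a) argument could start (this is exactly the pitfall discussed in Remark~\ref{rem-on-conv-thms}). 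The correct route is to pass to the completion $V_n$ of $(\dom(a_n),\norm\cdot_{a_n})$ with the (possibly non-injective) map $j_n\colon V_n\to H$, extend to the bounded $j_n$-elliptic forms $\tilde a_{n,z}=\Re\tilde a_n+z\Im\tilde a_n$ on $V_n\times V_n$, and apply the $j$-elliptic version of the holomorphic-family theorem (Theorem~\ref{thm-hol-dep-forms}, which rests on \cite{VogtVoigt2018}). With that substitution your argument goes through and is the paper's proof.
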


Note that the only difference between the hypotheses in this theorem and those 
in Theorem~\ref{thm-main} is that here the forms $a_n$ are no longer supposed 
to be closed; the ``price'' one has to pay is that in the conclusion
there is no longer a description of the limit linear relation in terms of
the forms $a_n$.

Before entering our proof of this result we have to explain how a linear 
relation is associated with a non-closable -- short for ``not necessarily 
closable'' -- form. Let 
$a$ be a 
sectorial form in $H$ with vertex~$0$ and angle $\theta\in[0,\pi/2)$. The norm 
$\|\cdot\|_a$ on $\dom(a)\sse H$ is defined as in Section~\ref{sec-prelims}. 
Let $V$ be the completion of $(\dom(a),\|\cdot\|_a)$. 
Then the 
continuous extension $j\colon V\to H$ of the embedding 
$\dom(a)\hookrightarrow H$ is not necessarily injective; 
in fact, $a$ is called 
closable if $j$ is injective. 

The form $a$ on $\dom(a)$ has a unique 
continuous extension $\tilde a\colon V\times V\to \C$, and this extension 
is $j$-elliptic and sectorial with vertex~$0$ and angle $\theta$. 
(By definition, $\tilde a$ being $j$-elliptic means that there exists 
$\omega\in \R$ such that the form $(u,v)\mapsto 
\tilde a(u,v)+\omega\scpr{ju}{jv}_H$ is coercive. In the present case, 
this condition holds with $\omega=1$.) The (m-sectorial) linear 
relation $A$ associated with $(a,j)$ is then given by
\[
A=\set{(x,y)\in H\times H}{\exists\,u\in V\colon ju=x,\ \tilde 
a(u,v)=\scpr y{jv}_H\ (v\in V)}.
\]
If $a$ is symmetric, then the linear relation $A$ is self-adjoint, and the 
(closed!) accretive form associated with $A$ is the closure of the regular part 
$a_{\mathrm 
r}$ of $a$, 
defined in \cite[Section~2]{Simon1978}. For these definitions and properties we 
refer to \cite[Section~3]{ArendtElst2012} and \cite[Section~3]{BattyElst2014}.

Next, we state a generalisation of the result on holomorphic families of 
type~(a), used in the proof of Theorem~\ref{thm-main}.

\begin{theorem}\label{thm-hol-dep-forms}
Let $V$ and $H$ be complex Hilbert spaces, $j\in\cL(V,H)$, let $\Omega\sse\C$ 
be open, and let $\theta\in[0,\pi/2)$. For each $z\in\Omega$ let $a_z\colon 
V\times V\to\C$ be a bounded $j$-elliptic sectorial form with vertex $0$ and 
angle $\theta$, and let $A_z$ denote the m-sectorial linear relation associated 
with $(a_z,j)$. Assume that for all $u,v\in V$ the function $\Omega\ni z\mapsto 
a_z(u,v)\in\C$ is holomorphic.

Then the mapping $\Omega\ni z\mapsto(I+A_z)\tmo\in\cL(H)$ is holomorphic.
\end{theorem}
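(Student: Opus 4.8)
The plan is to reduce the statement to a concrete resolvent formula and then verify holomorphy of that formula directly. First I would fix notation: for $z \in \Omega$ and $\omega = 1$ the $j$-elliptic form $a_z$ gives rise, via the Lax{\ndash}Milgram lemma, to a bounded invertible operator $\mathcal{A}_z \in \cL(V, V')$ defined by $\langle \mathcal{A}_z u, v\rangle = a_z(u,v) + \scpr{ju}{jv}_H$; coerciveness (uniform in $z$ if one shrinks $\Omega$ to a relatively compact piece, but pointwise suffices) guarantees $\mathcal{A}_z^{-1} \in \cL(V', V)$. The key algebraic identity, which is the standard description of the operator associated with a $j$-elliptic form, is
\[
(I + A_z)^{-1} = j\,\mathcal{A}_z^{-1} j^* \in \cL(H),
\]
where $j^* \in \cL(H, V')$ is the adjoint of $j \in \cL(V,H)$ composed with the canonical embedding $H \hookrightarrow H' \hookrightarrow V'$. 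I would either cite this from \cite{ArendtElst2012} or spend a short paragraph proving it: given $x \in H$, the pair $(x,y) \in A_z$ with $x + y =: w$ means there is $u \in V$ with $ju = x$ and $a_z(u,v) + \scpr{ju}{jv}_H = \scpr{w}{jv}_H$ for all $v$, i.e. $\mathcal{A}_z u = j^* w$, hence $u = \mathcal{A}_z^{-1} j^* w$ and $x = j\mathcal{A}_z^{-1}j^* w$; conversely this $u$ produces such a pair.

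Second, with this formula in hand, holomorphy of $z \mapsto (I+A_z)^{-1}$ follows once I show $z \mapsto \mathcal{A}_z^{-1} \in \cL(V',V)$ is holomorphic, since $j$ and $j^*$ are fixed bounded operators. For $z \mapsto \mathcal{A}_z$ itself, holomorphy into $\cL(V,V')$ is immediate: the hypothesis says $z \mapsto a_z(u,v)$ is holomorphic (scalar) for each fixed $u,v$, hence $z \mapsto \mathcal{A}_z$ is weakly holomorphic, and by the standard equivalence (weak holomorphy implies norm holomorphy for Banach-space-valued maps, using uniform boundedness on compact subsets, which here follows because a locally bounded family of sesquilinear forms is handled by the uniform boundedness principle applied on $V \times V$) it is holomorphic in operator norm. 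Then $z \mapsto \mathcal{A}_z^{-1}$ is holomorphic because inversion is a holomorphic (indeed real-analytic) map on the open set of invertible operators in $\cL(V,V')$ onto $\cL(V',V)$ — concretely, $\mathcal{A}_{z}^{-1} - \mathcal{A}_{z_0}^{-1} = -\mathcal{A}_{z}^{-1}(\mathcal{A}_z - \mathcal{A}_{z_0})\mathcal{A}_{z_0}^{-1}$, which gives both continuity and, after iterating, a norm-convergent Neumann-type power series in $(z - z_0)$ locally.

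The main obstacle I anticipate is not any single hard estimate but rather the bookkeeping around non-injectivity of $j$ and the fact that we are working with forms on the \emph{completion} $V$ rather than on $\dom(a)$: one must be careful that $\mathcal{A}_z$ is well-defined and boundedly invertible \emph{as a map into $V'$}, which is exactly where $j$-ellipticity (with the uniform choice $\omega = 1$ supplied by the statement) is used, and that the resolvent set indeed contains $1$ for every $z \in \Omega$ so that $(I+A_z)^{-1}$ makes sense throughout. A secondary point to handle cleanly is the passage from scalar (weak) holomorphy of $z \mapsto a_z(u,v)$ to norm-holomorphy of $z \mapsto \mathcal{A}_z$: one needs local boundedness of $\sup_{\|u\|,\|v\| \le 1} |a_z(u,v)|$, which one gets from the scalar holomorphy via the uniform boundedness principle (pointwise boundedness of the family $\{a_z\}_{z \in K}$ on $V \times V$ for compact $K$, because each scalar function is continuous hence bounded on $K$). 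Once these points are in place, the conclusion is essentially the composition of holomorphic maps $z \mapsto \mathcal{A}_z \mapsto \mathcal{A}_z^{-1} \mapsto j\mathcal{A}_z^{-1}j^* = (I+A_z)^{-1}$, and I would close by remarking that this specialises to the classical Kato result \cite[Theorem~VII.4.2]{Kato1980} when $j$ is the (dense) inclusion, which is exactly the extra generality invoked parenthetically in the proof of Theorem~\ref{thm-main}.
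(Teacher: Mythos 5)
Your argument is correct, but it is organised differently from the paper's. The paper disposes of the theorem in two steps: for $\ran(j)$ dense in $H$ it points to the argument of \cite[proof of Theorem~1.1]{VogtVoigt2018}, and for general $j$ it reduces to that case by splitting $H=H_0\oplus H_0^\perp$ with $H_0=\overline{\ran(j)}$ and observing that $(I+A_z)^{-1}$ is block-diagonal, equal to $(I_{H_0}+A_{z,0})^{-1}$ on $H_0$ and to $0$ on $H_0^\perp$. You instead inline the core analytic content (which is essentially the same mechanism as in the cited reference: norm holomorphy of the Lax--Milgram operator $z\mapsto\mathcal{A}_z\in\cL(V,V')$ followed by holomorphy of inversion) and, more importantly, you avoid the case distinction altogether via the identity $(I+A_z)^{-1}=j\,\mathcal{A}_z^{-1}j^*$, which is valid for arbitrary bounded $j$, non-injective and with non-dense range; your verification of this identity is the right one, and uniqueness of $x$ comes for free because $u=\mathcal{A}_z^{-1}j^*w$ is unique even when $j$ is not injective. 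What the paper's route buys is brevity given the citation; what yours buys is a self-contained proof in which the degenerate case costs nothing extra, since $j^*$ automatically annihilates $\ran(j)^\perp$.

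Two small points should be made explicit. First, the theorem does not literally supply $\omega=1$ in the definition of $j$-ellipticity; but since $a_z$ is sectorial with vertex $0$ one has $\Re a_z(u)\ge0$, so $j$-ellipticity with any $\omega$ already yields coercivity of $u\mapsto a_z(u)+\|ju\|_H^2$ (with a possibly smaller constant), which is all you use. Second, in passing from scalar holomorphy of $z\mapsto a_z(u,v)=\langle\mathcal{A}_z u,v\rangle$ to norm holomorphy of $z\mapsto\mathcal{A}_z$, you should note that the functionals $T\mapsto\langle Tu,v\rangle$ with $\|u\|_V,\|v\|_V\le1$ form a norming set on $\cL(V,V')$, so that the standard weak-implies-norm holomorphy theorem (together with the local boundedness you obtain from the uniform boundedness principle) applies. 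Neither point is a gap; both are one-line justifications.
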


If $\ran(j)$ is dense in $H$, then the proof can be given in the same way 
as in~\cite[proof of Theorem~1.1]{VogtVoigt2018}. Otherwise we define 
$H_0:=\ol{\ran(j)}$ and apply the previous case to the m-sectorial operators 
$A_{z,0}$ associated with $(a_z,j)$ in $H_0$. Then the description 
\eqref{eq-form-relation} of $A_z$ yields the assertion. Indeed, the resolvent 
$(I+A_z)\tmo$ decomposes according to the orthogonal sum $H_0\oplus H_0^\perp$, 
where the restriction to $H_0$ is $(I_{H_0}+A_{z,0})\tmo$ and the restriction to 
$H_0^\perp$ is the zero operator.

\begin{proof}[Proof of Theorem~\ref{thm-monconv-ncl}]
(i) First we treat the case where all the forms $a_n$ are symmetric. In this 
case we refer to \cite[Corollary~2.4]{Simon1978} for the property that one has 
$a_{n,\mathrm r}\le a_{n+1,\mathrm r}$ for all $n\in\N$, where $a_{n,\mathrm 
r}$ denotes the regular 
part of $a_n$. 
(In the quoted reference the 
property needed here is only formulated for densely defined forms. However, it 
is immediate that the proof of \cite[Theorem~2.2]{Simon1978}, which is the 
basis for the quoted property, works also for non-densely defined forms.) Then 
we conclude from \cite[Theorem~4.1]{Simon1978} that the sequence $(A_n)$ 
converges (to some~$A$) in strong resolvent sense.

(ii) For the general case we define
\[
a_{n,z}:=\Re a_n +z\Im a_n\qquad(n\in\N,\ z\in\C)
\]
and note that the arguments as in step (iii) of the proof of 
Theorem~\ref{thm-main} yield a constant $C>0$ such that
\[
\mathopen|\Im a_{n,z}(u)|
\le
\frac{\mathopen|\Im z|\,C}{1-\mathopen|\Re z|\,C}\Re a_{n,z}(u),
\]
for all $z\in\Omega:=\set{z\in\C}{\mathopen|\Re 
z|<1/C}$, $n\in\N$, $u\in\dom(a_n)$, and
\begin{equation}\label{eq-est-ncl-forms}
a_{n,x} \le a_{n+1,x}\qquad\bigl(x\in(-1/C,1/C),\ n\in\N\bigr).
\end{equation}
In particular, for $z\in\Omega$, $n\in\N$ the form $a_{n,z}$ is sectorial; 
let 
$A_{n,z}$ be the linear relation associated with $a_{n,z}$ (as explained 
above). From 
\eqref{eq-est-ncl-forms} and step (i) we conclude that, for $x\in(-1/C,1/C)$, 
there exists an operator $R_x\in\cL(H)$ such that $(I+A_{n,x})^{-1}\to R_x$ 
($n\to\infty$) strongly.

For $n\in\N$, let $V_n$ be the completion of $(\dom(a_n),\|\cdot\|_{a_n})$, 
$j_n\colon V_n\to H$ the continuous extension of the embedding 
$\dom(a_n)\hookrightarrow H$ as described above, let $\tilde a_n$ be the 
continuous extension of $a_n$ to $V_n\times V_n$, and define the 
$j_n$-elliptic sectorial forms $\tilde a_{n,z}$ by
\[
\tilde a_{n,z}:=\Re \tilde a_n + z\Im\tilde a_n.
\]
Then $\tilde a_{n,z}$ is the continuous extension of $a_{n,z}$,
so $A_{n,z}$ is associated with $\tilde a_{n,z}$.
The application of Theorem~\ref{thm-hol-dep-forms} shows that
$\Omega\ni z\mapsto(I+A_{n,z})^{-1}\in\cL(H)$ is 
holomorphic. As in step (iii) of the proof of Theorem~\ref{thm-main}, Vitali's 
convergence theorem implies the strong convergence of 
$\bigl((I+A_n)^{-1}=(I+A_{n,\imu})^{-1}\bigr)_{n\in\N}$ to an operator 
$R\in\cL(H)$. It follows from Lemma~\ref{lem-limit-sect}, proved subsequently, 
that the linear relation $A$ determined by $R=(I+A)^{-1}$
is m-sectorial.
\end{proof}

\begin{lemma}\label{lem-limit-sect}
Let $H$ be a complex Hilbert space, $(A_n)$ a sequence of m-sectorial relations 
in $H$ with vertex $0$ and angle $\theta\in[0,\pi/2)$, and assume that 
$R:=\slim(I+A_n)^{-1}$ exists. Then the linear relation $A$ 
determined by $R=(I+A)^{-1}$ is m-sectorial with vertex $0$ and angle $\theta$.
\end{lemma}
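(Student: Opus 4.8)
The plan is to characterise the resolvent $R=(I+A)^{-1}$ directly and read off both m-sectoriality conditions from the corresponding properties of the $R_n:=(I+A_n)^{-1}$. Recall that for an m-sectorial linear relation $A$ with vertex $0$ and angle $\theta$, the resolvent $R=(I+A)^{-1}$ lies in $\cL(H)$, is injective (since $-1$ is not an eigenvalue of $A$), and $A=R^{-1}-I=\set{(Rx,x-Rx)}{x\in H}$. So the first step is to show that $R$ is injective; then $A:=\set{(Rx,x-Rx)}{x\in H}$ is a well-defined linear relation with $(I+A)^{-1}=R$, and $\ran(I+A)=\ran(I)=H$, giving the maximality (``m'') part for free. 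The sectoriality condition $\scpr yx\in\ol{\Sigma_\theta}$ for $(x,y)\in A$ translates, upon writing $x=Rw$, $y=w-Rw$, into $\scpr{w-Rw}{Rw}\in\ol{\Sigma_\theta}$ for all $w\in H$, and this passes to the limit from the analogous statement for $R_n$ since $R_nw\to Rw$ and $\ol{\Sigma_\theta}$ is closed.

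Concretely, I would proceed as follows. First, record that each $A_n$ being m-sectorial with vertex $0$, angle $\theta$ gives $\norm{R_n}\le1$ (contractivity of $(I+A_n)^{-1}$, since $\Re\scpr{y}{x}\ge0$ on $A_n$ forces $\norm{(I+A_n)^{-1}}\le1$) and $\scpr{w-R_nw}{R_nw}\in\ol{\Sigma_\theta}$ for all $w\in H$. Taking strong limits, $\norm R\le1$ and $\scpr{w-Rw}{Rw}\in\ol{\Sigma_\theta}$ for all $w\in H$; in particular $\Re\scpr{w-Rw}{Rw}\ge0$. Second, prove injectivity of $R$: if $Rw=0$ then, expanding, $0\le\Re\scpr{w-Rw}{Rw}$ is vacuous, so instead use that for any $v\in H$ and $t\in\R$ one has $\scpr{(w+tv)-R(w+tv)}{R(w+tv)}\in\ol{\Sigma_\theta}$; with $Rw=0$ this reads $t\,\scpr{w+tv-tRv}{Rv}\in\ol{\Sigma_\theta}$, and letting $t\to0\rlim$ and $t\to0\llim$ through the sectorial estimate $\mathopen|\Im\zeta|\le(\tan\theta)\Re\zeta$ one forces $\scpr{w}{Rv}=0$ for all $v$, i.e. $w\in\ran(R)^\perp$. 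But $\ol{\ran(R)}=\ol{\dom(A_n)}$-type density is not automatic; instead I would use the cleaner route: $R^*$ also arises as a strong limit of the $R_n^*=(I+A_n^*)^{-1}$ along a subsequence (or note $R_n^*\to R^*$ in the weak operator topology), and $A_n^*$ is again m-sectorial, so the same computation gives $\Re\scpr{w-R^*w}{R^*w}\ge0$; then $Rw=0$ together with a symmetric argument yields $w=0$. Alternatively, and most simply, invoke that $R$ being a contraction with $\Re\scpr{w-Rw}{Rw}\ge0$ means $I-R$ is accretive and $R$ is a contraction, whence $\ker R\subseteq\ker(I-R)^{*}\cap\cdots$; I will pick whichever of these is shortest once the details are in front of me.

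Third, with $R$ injective set $A:=R^{-1}-I$ as a linear relation (i.e. $A=\set{(x,y)}{x\in\ran R,\ x+y=R^{-1}x}$). Then $(I+A)^{-1}=R$ by construction, $\ran(I+A)=H$, and for $(x,y)\in A$ write $x=Rw$ with $w=x+y$; the limit relation $\scpr{w-Rw}{Rw}=\scpr{y}{x}\in\ol{\Sigma_\theta}$ from the first step is exactly sectoriality of $A$ with vertex $0$ and angle $\theta$. Hence $A$ is m-sectorial with vertex $0$ and angle $\theta$, as claimed.

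**Main obstacle.** The only non-formal point is the injectivity of $R$: strong resolvent limits of injective contractions need not be injective in general, so one genuinely has to use the sectorial (accretivity) structure — the inequality $\Re\scpr{w-Rw}{Rw}\ge0$ for all $w$ — to rule out a kernel. I expect the perturbation argument $w\mapsto w+tv$ with $t\to0$, exploiting that $\ol{\Sigma_\theta}$ touches $0$ only along the positive real axis with a definite opening, to be the crux; everything else is bookkeeping about passing closed cone conditions and the identity $A=R^{-1}-I$ through the limit.
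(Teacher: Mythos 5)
There is a genuine gap, and it sits exactly at the point you flag as the ``main obstacle'': the injectivity of $R$. That claim is false in general, so no argument can establish it. Take $A_n:=nI$ on $H$: each $A_n$ is m-sectorial with vertex $0$ and angle $0$, and $(I+A_n)^{-1}=\frac1{n+1}I\to0=R$ strongly, so $\ker R=H$. More generally, for a non-densely defined m-sectorial relation $A=A_0\oplus(\{0\}\times H_0^\perp)$ one has $(I+A)^{-1}=(I_0+A_0)^{-1}P_0$, whose kernel is $H_0^\perp$; so already your preliminary remark that $(I+A)^{-1}$ is injective because ``$-1$ is not an eigenvalue'' conflates single-valuedness of $A$ with invertibility and is wrong in the linear-relation setting of this paper. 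Your perturbation argument $w\mapsto w+tv$ does prove something true, namely $\ker R\perp\ran R$, but that cannot be upgraded to $\ker R=\{0\}$, and the alternative routes you sketch (via $R^*$, or via accretivity of $I-R$) must likewise fail, since the statement they are meant to prove is false.

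The repair is to drop injectivity altogether. In the linear-relation calculus, $R^{-1}=\set{(y,x)}{(x,y)\in R}$ is a (possibly multivalued) linear relation for \emph{any} $R\in\cL(H)$, so $A:=R^{-1}-I=\set{(y,x-y)}{(x,y)\in R}$ is well defined without any hypothesis; a nontrivial kernel of $R$ just means $A$ is multivalued (equivalently, non-densely defined), and this is harmless for sectoriality because $(0,y)\in A$ gives $\scpr{y}{0}=0\in\ol{\Sigma_\theta}$. With this reading, the remainder of your argument is precisely the paper's proof: $\ran(I+A)=\dom(R)=H$, and for $(x,y)\in R$ one has $(y_n,x-y_n)\in A_n$ with $y_n:=(I+A_n)^{-1}x\to y$, so $\scpr{x-y_n}{y_n}\in\ol{\Sigma_\theta}$ passes to the limit in the closed cone $\ol{\Sigma_\theta}$. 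In short: the verification steps you list are correct and complete, while the obstacle that absorbs most of your effort is an artifact of insisting that $A$ be an operator.
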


\begin{proof}
Note that $A=R^{-1}-I=\set{(y,x-y)}{(x,y)\in R}$. Clearly, $\ran(I+A) = \dom(R)
= H$. Let $(x,y)\in R$. With 
$y_n:=(I+A_n)^{-1}x$, i.e., $(y_n,x-y_n)\in A_n$, we have $\scpr{x-y_n}{y_n}\in 
\ol{\Sigma_\theta}$ and $y_n\to y$, hence $\scpr{x-y}{y}\in 
\ol{\Sigma_\theta}$. 
\end{proof}

\begin{remark}\label{rem-on-conv-thms}
We did not derive Theorem~\ref{thm-monconv-ncl} as a corollary of 
Theorem~\ref{thm-main}, but rather the proofs of the two results are ``the 
same''. However, we comment on the following decisive difference. In 
Theorem~\ref{thm-main} we had the limit form $a$ at our disposal, and
each of the 
linear relations $A_z$ in step (iii) of the proof was associated with the 
form $a_z=\Re a+z\Im a$, whereas in the proof of 
Theorem~\ref{thm-monconv-ncl}, the linear relation $A$ is obtained in 
an indirect way as a limit in strong resolvent sense. 

An idea to prove Theorem~\ref{thm-monconv-ncl} as a direct 
corollary of Theorem~\ref{thm-main} would be as follows. Denote by $\hat a_n$ 
the closed sectorial form with $\dom(\hat a_n)\sse H$ associated with $A_n$. As 
by hypothesis $a_{n+1}-a_n$ is sectorial (uniformly in $n$), it is 
tempting to think that $\hat a_{n+1}-\hat a_n$ might be sectorial (with the 
same vertex and angle). However, this idea fails dramatically, as we 
will illustrate by Example~\ref{example-sec-forms}(a).

A noteworthy feature concerning the operators $A_{n,z}$ associated with
$\Re a_n+z\Im a_n$, for the non-closable forms $a_n$ presented in  
Example~\ref{example-sec-forms}, is illustrated in part (b) of this example: 
none of the families $(A_{n,z})_{z\in\Omega}$ corresponds 
to a family $(b + z c)_{z\in\Omega}$, with closed symmetric forms $b$ and $c$. 
\end{remark}

\begin{example}\label{example-sec-forms}
Let $H=L_2(0,1)$. 

(a) We present a sequence $(a_n)$ of non-closable sectorial forms 
in $H$, with vertex $0$ and angle $\pi/4$, $a_{n+1}-a_n$ symmetric and 
accretive for all $n$, where the associated closed forms $\hat a_n$ are all 
symmetric, and the sequence $(\hat a_n)$ is strictly \emph{decreasing}.

For $n\in \N$ we define $\dom(a_n):=C[0,1]$,
\[
a_n(u,v):=\tint u\ol v+nu(0)\ol{v(0)}
     +\imu\bigl(u(0)\tint\ol v + (\tint u)\ol{v(0)}\bigr).
\]
Then
\begin{align*}
&(\Re a_n)(u,v)=\tint u\ol v+nu(0)\ol{v(0)},\\
&(\Im a_n)(u,v)=u(0)\tint\ol v + (\tint u)\ol{v(0)}
\qquad (u,v\in\dom(a_n)).
\end{align*}
Because of $\mathopen|\tint u|\le\norm u_2$ and $|u(0)|\norm 
u_2\le\frac12\bigl(\norm u_2^2+|u(0)|^2\bigr)$ we obtain
\[
 \mathopen|\Im a_n(u)|\le\Re a_n(u)\qquad (u\in\dom(a_n),\ n\in\N).
\]
Also, $a_{n+1}(u)-a_n(u)=|u(0)|^2\ge0$ for all $u\in\dom(a_{n+1})$, $n\in\N$.

For all $n\in\N$, the $a_n$-norm on $C[0,1]$ is equivalent to $\norm 
u:=\bigl(\norm u_2^2+|u(0)|^2\bigr)^{1/2}$. Therefore, $V := L_2(0,1)\oplus\C$ 
is a completion 
$(C[0,1],\norm\cdot_{a_n})$, and the 
continuous extension $j\colon V\to L_2(0,1)$ of the embedding 
$C[0,1]\hookrightarrow L_2(0,1)$ is given by 
\[
j(u,\alpha)=u\qquad((u,\alpha)\in L_2(0,1)\oplus\C).
\]

The continuous extension $\tilde a_n$ of $a_n$ to $V\times V$ is given by
\[
\tilde a_n\bigl((u,\alpha),(v,\beta)\bigr)=
\tint u\ol v+n\alpha\ol\beta +
      \imu\bigl(\alpha\tint\ol v +(\tint u)\ol\beta\bigr).
\]
For $u,f\in L_2(0,1)$ we have $(u,f)\in A_n$ if and only if
there exists $\alpha\in\C$ such that for all $(v,\beta)\in V$ one has
\begin{equation}\label{eq-form-for-an-0}
\tint u\ol v+n\alpha\ol\beta+\imu\bigl(\alpha\tint\ol v+(\tint u)\ol\beta\bigr)
=\tilde a_n\bigl((u,\alpha),(v,\beta)\bigr)=\scpr f{j(v,\beta)}=\tint f\ol v.
\end{equation}
This property is equivalent to $\alpha=-\rfrac\imu n \tint u$ and $f= 
u+(\rfrac1n\tint u)1$. This shows that the operator $A_n$ associated with the 
form $a_n$ is given by $A_nu=u+(\rfrac1n\tint u)1$ ($u\in L_2(0,1)$), and the 
associated closed form $\hat a_n$ with $\dom(\hat a_n)\sse L_2(0,1)$ is given by
\begin{equation*}
 \hat a_n(u,v)=\tint u\ol v+ \rfrac1n\tint u\tint\ol v\qquad (u,v\in L_2(0,1)).
\end{equation*}
Hence all the forms $\hat a_n$ are symmetric, $\hat a_{n+1}\le\hat 
a_n$ for all $n\in\N$, and $A_n\to I$ in $\cL(L_2(0,1))$.

(b) For the forms from part (a), using \eqref{eq-form-for-an-0} with $z$ 
in place of $\imu$, one can compute the operators 
$A_{n,z}$ associated with the form $a_{n,z}:=\Re a_n+ z\Im a_n$, for 
$\mathopen|\Re z|<1$. The result is
\[
A_{n,z}u = u-\rfrac{z^2}n(\tint u)1\qquad (u\in L_2(0,1)),
\]
with associated closed form
\[
\hat a_{n,z}(u,v)=\tint u\ol v- \rfrac{z^2}n\tint u\tint\ol v
\qquad (u,v\in L_2(0,1)).
\]
In contrast, the 
forms $(\hat a_n)_z:=\Re\hat a_n+z\Im\hat a_n=\hat a_n$ do
not depend on $z$ since $\hat a_n$ is symmetric.
The only points $z$ where the forms $(\hat a_n)_z$ 
and $\hat a_{n,z}$ coincide are $z=\pm\imu$.
\end{example}

{\frenchspacing

}
\bigskip

\noindent
Hendrik Vogt\\
Fachbereich Mathematik\\
Universit\"at Bremen\\
Postfach 330 440\\
28359 Bremen, Germany\\
{\tt 
hendrik.vo\rlap{\textcolor{white}{hugo@egon}}gt@uni-\rlap{\textcolor{white}{%
hannover}}bremen.de}\\[3ex]
J\"urgen Voigt\\
Technische Universit\"at Dresden\\
Fakult\"at Mathematik\\
01062 Dresden, Germany\\
{\tt 
juer\rlap{\textcolor{white}{xxxxx}}gen.vo\rlap{\textcolor{white}{yyyyyyyyyy}}%
igt@tu-dr\rlap{\textcolor{white}{%
zzzzzzzzz}}esden.de}

\end{document}